\newtheorem{lemma}{Lemma}
\newtheorem{theorem}[lemma]{Theorem}
\theoremstyle{definition}
\theoremstyle{remark}
\newtheorem{rem}[lemma]{\bf Remark}
\newtheorem{ex}[lemma]{\bf Example}
\newcommand{\mm}{\mathfrak{m}}
\newcommand{\Tor}{\mathrm{Tor}}
\newcommand{\gr}{\mathrm{gr}}
\newcommand{\coker}{\mathrm{coker}}
\newcommand{\MM}{\mathcal{M}}
\newcommand{\NN}{\mathcal{N}}
\newcommand{\Image}{\mathrm{Image}}
\newcommand{\Lex}{\mathrm{Lex}}
\title{Consecutive cancellations in Tor modules over local rings}
\author{Alessio Sammartano}
\subjclass[2010]{Primary: 13D07; Secondary: 13A30, 13D02.}
\keywords{Filtered module, associated graded module.}
\address{Department of Mathematics, Purdue University,  West Lafayette, IN 47907, USA}
\email{asammart@purdue.edu}
\begin{document}

\begin{abstract}
Let $M,N$ be finite modules over a Noetherian local ring $R$.
We show that the bigraded Hilbert series of 
$\gr (\Tor^R(M,N))$
 is obtained from that of $\Tor^{\gr(R)}(\gr(M),\gr(N))$
 by negative consecutive cancellations, 
thus extending a  theorem of Rossi and Sharifan.
\end{abstract}

\maketitle

\section{Introduction}

Given a module over a local or graded ring, a major problem in commutative algebra is to understand the behavior of its homological data.
It is often convenient to deal with ``approximations'' of the module with nicer properties,
and try to compare their invariants with those of the original module.
A well-known phenomenon is the fact that the Betti numbers increase
when passing to initial submodules with respect to term orders or weights (cf. \cite[Theorem 8.29]{MS}),
lex-segment submodules (cf. \cite{Bi,Hu,IP,Pa}), or associated graded modules with respect to filtrations (cf. \cite{F}). 
The idea of consecutive cancellations, introduced by Peeva \cite{Pe} in the graded context, 
arises in the attempt of describing these inequalities more precisely.
Peeva showed  that the graded Betti numbers of a homogeneous ideal $I$ in a polynomial ring are obtained from those of the associated lex-segment ideal $\Lex(I)$ by a sequence of \emph{zero consecutive cancellations},
i.e. cancellations from two Betti numbers $\beta_{i+1,j}(\Lex(I)), \beta_{i,j}(\Lex(I))$.
The theorem can be generalized to modules (cf. \cite[Theorem 1.1]{Pe} and the subsequent remark).

Now let $(R,\mm)$ be a local Noetherian ring and $\gr_\mm(R)$ its associated graded ring.
A common strategy used to study homological invariants over $R$ is to endow modules and complexes with filtrations and consider the associated graded objects,
thus taking advantage of the rich literature available for graded rings.
In this framework, 
Rossi and Sharifan \cite{RS1,RS} investigate  the relation between the minimal free resolution of an $R$-module $M$ and that of the associated graded module $\gr_\MM (M)$  over $\gr_\mm(R)$,
where $\MM$ is the $\mm$-adic filtration or more generally an $\mm$-stable filtration of $M$
 (see the next section for definitions).
Inspired by Peeva's result, 
they introduce the notion of \emph{negative consecutive cancellations}, i.e. cancellations from two Betti numbers $\beta_{i+1,a}(\gr_\MM (M)), \beta_{i,b}(\gr_\MM (M))$ with $a<b$.
They prove the following result for the case of a regular local ring:

\begin{theorem}[{\cite[Theorem 3.1]{RS}}]\label{RossiSharifanTheorem}
Let $(R,\mm)$ be a regular local ring and $\gr_\mm (R)$ its associated graded ring.
If $M$ is a finite $R$-module with an $\mm$-stable filtration $\MM$,
the  Betti numbers of $M$ are obtained from the graded Betti numbers of the $\gr_\mm (R)$-module $\gr_\MM (M)$ by a sequence of negative consecutive cancellations.
\end{theorem}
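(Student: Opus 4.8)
The plan is to realize the graded Betti numbers of $\gr_\MM(M)$ as the ranks of a (generally non-minimal) free resolution of $M$ over $R$, and then to track the filtration degrees of the free generators as this resolution is reduced to the minimal one; the ``negative'' direction of the cancellations will fall out of the filtration compatibility of the differentials. First I would record the structural input specific to the regular case: since $R$ is regular, $\gr_\mm(R)$ is a polynomial ring over the residue field $k$, so $\gr_\MM(M)$ has a genuine minimal graded free resolution $(G_\bullet,\partial_\bullet)$ with $G_i=\bigoplus_j \gr_\mm(R)(-j)^{\beta_{i,j}}$, where $\beta_{i,j}$ denotes the graded Betti numbers of $\gr_\MM(M)$ over $\gr_\mm(R)$.

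The heart of the construction is to lift $G_\bullet$ to a complex $(F_\bullet,d_\bullet)$ of filtered free $R$-modules, with filtration-preserving differentials, such that $\gr F_\bullet=G_\bullet$. Concretely, each generator of $G_i$ lying in internal degree $j$ is lifted to a free generator of $F_i$ assigned filtration degree $j$, so that $\mathrm{rank}_R F_i=\sum_j \beta_{i,j}$, and the entries of $d_\bullet$ are chosen with prescribed leading forms $\partial_\bullet$. I would then invoke the standard principle that, because the associated graded complex $G_\bullet$ is acyclic and the filtration $\MM$ is $\mm$-stable (hence bounded below and separated, with $R$ Noetherian), the lifted complex $F_\bullet$ is itself acyclic and thus a free resolution of $M$. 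I expect this lifting-and-acyclicity step to be the main technical obstacle: it rests on a spectral-sequence or induction-on-filtration-degree argument, and one must check that a compatible lift exists at every homological stage.

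Next comes the degree bookkeeping that forces the negative direction. Because $\partial_\bullet$ is minimal, every entry of $\partial_i$ lies in the irrelevant maximal ideal of $\gr_\mm(R)$, so a nonzero entry from a degree-$a$ generator of $G_i$ to a degree-$b$ generator of $G_{i-1}$ is homogeneous of positive degree $a-b$, forcing $a>b$. Transporting this to $F_\bullet$ via filtration compatibility of $d_i$, an entry $c$ from a generator of filtration degree $a$ to one of filtration degree $b$ must satisfy $c\in\mm^{a-b}$ whenever $a\ge b$; hence $c$ can be a \emph{unit} only when $a<b$, the case $a=b$ being excluded by minimality of $G_\bullet$ (a unit leading form would be a degree-$0$ entry of $\partial_i$) and the case $a>b$ forcing $c\in\mm$. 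Thus every unit entry of every differential of $F_\bullet$ points from a lower filtration degree to a strictly higher one.

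Finally, I would reduce $F_\bullet$ to the minimal free resolution of $M$ by successively cancelling unit entries via Gaussian elimination over the local ring $R$. Each such cancellation deletes one generator of $F_i$ of degree $a$ and one generator of $F_{i-1}$ of degree $b$ with $a<b$, which is exactly a negative consecutive cancellation between $\beta_{(i-1)+1,a}$ and $\beta_{i-1,b}$. Since each elimination can be carried out compatibly with the filtration, the reduced complex remains a filtered lift of the same type, so the degree inequality persists and the process continues inductively until no unit entries survive. At that stage $F_\bullet$ is minimal, its ranks are the Betti numbers of $M$, and we have displayed those Betti numbers as the output of a sequence of negative consecutive cancellations applied to the graded Betti numbers of $\gr_\MM(M)$, as required.
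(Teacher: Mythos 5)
Your proposal reproduces the strategy of Rossi and Sharifan's original proof (lift the minimal graded free resolution of $\gr_\MM(M)$ to a filtered free $R$-resolution of $M$, then pass to the minimal resolution by cancelling unit entries), not the route taken in this paper, which instead analyzes the Serre spectral sequence of the filtered complex ${\bf F}\otimes_R N$ and never manipulates matrices. Your first three steps are sound: the lifting you worry about is exactly Robbiano's theorem, cited here, and in the lifted resolution an entry from a generator of filtration degree $a$ to a generator of filtration degree $b$ lies in $\mm^{\max(0,a-b)}$, with minimality of the graded resolution excluding units when $a=b$, so units occur only when $a<b$. The gap is in your last step, in the sentence ``Since each elimination can be carried out compatibly with the filtration, the reduced complex remains a filtered lift of the same type, so the degree inequality persists.'' That claim is false in general, and it is exactly where the real difficulty of the theorem lies.

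Concretely: cancel a unit entry $u$ from a generator $e_L$ of degree $a_L$ to a generator $f_K$ of degree $b_K$ (so $a_L<b_K$). Every remaining entry transforms as $c_{kl}\mapsto c_{kl}-c_{kL}\,c_{Kl}\,u^{-1}$, where filtration compatibility only guarantees $c_{kL}\in\mm^{\max(0,a_L-b_k)}$ and $c_{Kl}\in\mm^{\max(0,a_l-b_K)}$. Because $a_L-b_K<0$, the sum of these exponents can be strictly smaller than $\max(0,a_l-b_k)$: take $a_L=0$, $b_k=1$, $a_l=2$, $b_K=3$. Then $c_{kL}$ and $c_{Kl}$ may both be units while $c_{kl}\in\mm$, and after your elimination the new entry from the degree-$2$ generator of $F_i$ to the degree-$1$ generator of $F_{i-1}$ is a unit; cancelling it removes $\beta_{i,2}$ and $\beta_{i-1,1}$, a \emph{positive} cancellation. (In this configuration, cancelling the unit entry $c_{kL}$ from $e_L$ to $f_k$ first produces only negative cancellations; so the theorem survives, but only because a suitable \emph{order} of eliminations, or a re-pairing of the cancelled generators, exists --- and proving that is the actual content, which Rossi--Sharifan's matrix analysis has to work for.) The spectral sequence argument of this paper sidesteps the issue structurally: a cancellation between pages $r$ and $r+1$ is realized by the bijection $\delta:\coker(\iota_{i,j})\to\ker(\pi_{i-1,j+r})$, so it necessarily pairs internal degree $j$ in homological degree $i$ with internal degree $j+r$, $r\geq 1$, in homological degree $i-1$; negativity is automatic at every page, no ordering argument is needed, and the method moreover works over any Noetherian local ring and for arbitrary Tor modules, where minimal resolutions may be infinite.
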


Their argument relies on explicit manipulations on the matrices of the differential 
 in a free $R$-resolution of $M$ obtained by lifting the minimal free $\gr_\mm (R)$-resolution of $\gr_\MM (M)$.
In this note we take an alternative approach, namely we employ a spectral sequence, introduced by Serre \cite{Se}, arising from a suitable filtered complex.
We provide the following generalization of Theorem \ref{RossiSharifanTheorem}, which holds for Tor modules over an arbitrary Noetherian local ring:

\begin{theorem}\label{Main}
Let $(R,\mm)$ be a Noetherian local ring, $\Bbbk=R/m$ its residue field, and $\gr_\mm (R)$ its associated graded ring.
Given two    finite $R$-modules $M,N$  with $\mm$-stable filtrations $\MM, \NN$,
there exist $\mm$-stable filtrations on each $\Tor_i^R(M,N)$ such that the Hilbert series
$$
\sum_{i,j}\dim_\Bbbk\gr (\Tor_i^R(M,N))_j z^it^j 
$$
 is obtained from the Hilbert series 
$$
\sum_{i,j}\dim_\Bbbk \Tor_i^{\gr_\mm (R)}(\gr_\MM (M),\gr_\NN (N))_j z^it^j
$$
by  negative consecutive cancellations, i.e. subtracting terms of the form $z^{i+1}t^a+z^{i}t^b$ with $a<b$.
\end{theorem}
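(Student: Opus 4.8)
The plan is to realize both Hilbert series as the first and last pages of a single spectral sequence, so that the passage from $\Tor^{\gr_\mm(R)}(\gr_\MM(M),\gr_\NN(N))$ to $\gr(\Tor^R(M,N))$ is recorded precisely by its differentials. First I would fix the minimal graded free resolution $(\tilde F_\bullet,\tilde d)$ of $\gr_\MM(M)$ over $\gr_\mm(R)$ and \emph{lift} it to a filtered complex of free $R$-modules $(F_\bullet,d)$: each $F_i$ is free, with generators placed in the filtration degrees prescribed by the homogeneous basis of $\tilde F_i$, and $d$ is chosen so that $d^2=0$ and the induced map on associated graded modules is exactly $\tilde d$, i.e. $\gr(d)=\tilde d$. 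Such a lift can be built inductively, the exactness of $\tilde F_\bullet$ supplying the correction terms needed to kill $d^2$ while preserving leading terms. Since $\gr(F_\bullet)\cong\tilde F_\bullet$ is then a free resolution of $\gr_\MM(M)$, a standard Artin--Rees/filtered-acyclicity argument shows that $F_\bullet$ is itself a free resolution of $M$, and every syzygy inherits an $\mm$-stable filtration.

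Next I would tensor with $N$. Equipping $C_\bullet=F_\bullet\otimes_R N$ with the tensor filtration induced by those of $F_\bullet$ and $N$, one has $H_i(C_\bullet)=\Tor_i^R(M,N)$ carrying the induced filtration. The key computation is the identification $\gr(C_\bullet)\cong\gr(F_\bullet)\otimes_{\gr_\mm(R)}\gr_\NN(N)$, valid because each $F_i$ is free (tensoring with a shifted copy of $R$ merely shifts the filtration of $N$), together with the fact that the leading term of $d\otimes 1$ is $\tilde d\otimes 1$. Feeding $C_\bullet$ into Serre's spectral sequence then gives $E_0=\gr(C_\bullet)$ with differential $\tilde d\otimes 1$, hence
\[
E_1^{i,j}\cong\Tor_i^{\gr_\mm(R)}(\gr_\MM(M),\gr_\NN(N))_j,
\]
while the sequence converges to $\gr(\Tor_i^R(M,N))$. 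To keep convergence unproblematic I would first pass to the $\mm$-adic completion, which affects neither Hilbert series (as $\hat R$ is flat and $\gr$ is unchanged), so that the filtration on each $C_i$ is complete and separated; the induced filtrations on the $\Tor_i^R(M,N)$ are then $\mm$-stable by Artin--Rees, which also supplies the filtrations asserted to exist in the statement.

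It remains to read off the shape of the cancellations from the bidegrees. The differential of $C_\bullet$ lowers homological degree by one, while its correction terms strictly raise the filtration weight; thus the $r$-th differential has the form $d_r\colon E_r^{i+1,a}\to E_r^{i,\,a+r}$ for $r\ge 1$. A rank-one component of $d_r$ therefore removes one basis element from bidegree $(i+1,a)$ and one from $(i,a+r)$, i.e. subtracts $z^{i+1}t^{a}+z^{i}t^{a+r}$, which is exactly a term $z^{i+1}t^a+z^i t^b$ with $a<b=a+r$. Since the dimensions $\dim_\Bbbk E_r^{i,j}$ are non-increasing in $r$ and stabilize to $\dim_\Bbbk\gr(\Tor_i^R(M,N))_j$, summing these rank contributions over all $r\ge 1$ expresses the Hilbert series of $\gr(\Tor^R(M,N))$ as that of $\Tor^{\gr_\mm(R)}(\gr_\MM(M),\gr_\NN(N))$ minus a non-negative integer combination of such terms, which is the desired sequence of negative consecutive cancellations. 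I expect the genuine work to lie in the first step, namely establishing the filtered lift with $\gr(d)=\tilde d$ together with the resulting strictness (equivalently, that $\gr(F_\bullet)$ is a resolution of $\gr_\MM(M)$ and $F_\bullet$ a resolution of $M$); once that is in place, the degree bookkeeping and the convergence of the spectral sequence are formal.
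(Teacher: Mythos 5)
Your proposal is correct and follows essentially the same route as the paper: lift a graded free resolution of $\gr_\MM(M)$ to a filtered free $R$-resolution (this is Robbiano's lifting theorem, which the paper cites rather than re-derives), tensor with $N$ using the tensor filtration, and run Serre's spectral sequence, reading each negative consecutive cancellation off a rank-one piece of the degree-raising differential $d_r\colon E_r^{i+1,a}\to E_r^{i,a+r}$. The remaining differences are cosmetic: the paper dispenses with your completion step (convergence for $\mm$-stable filtrations is covered by Artin--Rees and Serre's treatment), and it verifies the pairing of cancellations by an explicit bijection $\delta$ between dying cycles and new boundaries rather than invoking the standard identity $E_{r+1}\cong H(E_r,d_r)$.
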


\section{Proof of the main result}

We begin by providing the necessary background and definitions.
Let $(R,\mm)$  be a Noetherian local ring with residue field $\Bbbk=R/\mm$.
The ring $G= \gr_\mm(R)=\bigoplus_{n\geq 0} {\mm^j}/{\mm^{j+1}}$ is known as the \emph{associated graded ring} of $R$,
and it is a standard graded $\Bbbk$-algebra.
Geometrically, 
if $R$ is the local ring of a variety $\mathcal{V}$ at a point $\mathfrak{p}$,
then $G$ is the homogeneous coordinate ring of the tangent cone to $\mathcal{V}$ at $\mathfrak{p}$;
for this reason $G$ is sometimes called the tangent cone of $R$.

Let $M$ be a finitely generated $R$-module.
A descending filtration $\MM = \{M^j\}_{j\in \mathbb{N}}$ of $R$-submodules of $M$ is said to be \emph{$\mm$-stable} if the following conditions are satisfied:
 $M^0=M$, $\mm M^j \subseteq M^{j+1}$ for all $j\geq 0$, and $\mm M^j = M^{j+1}$ for sufficiently large $j$.
If $\MM$ is an $\mm$-stable filtration of $M$, we define the \emph{associated graded module of $M$ with respect to $\MM$} as $\gr_\MM (M) =\bigoplus_{j\geq 0} {M^j}/{M^{j+1}}$;
it is a finitely generated graded $G$-module.
The central example is that of the $\mm$-adic filtration $\MM = \{ \mm^j M \}_{j\in \mathbb{N}}$,
however it will be necessary to allow for more general ones.
In order to simplify the notation, sometimes we will just write $\gr (M)$ if the filtration is clear from the context.

Let $M,N$ be two $R$-modules with $\mm$-stable filtrations.
An $R$-linear map $f:M\rightarrow N$ such that $f(M^j)\subseteq N^j$ for all $j$ induces a homogeneous $G$-linear map $\gr(f):\gr(M)\rightarrow \gr(N)$.
In fact, $\gr(\cdot)$ is a functor from the category of $R$-modules with $\mm$-stable filtrations to the category of graded $G$-modules,
and in particular we can consider associated graded complexes of filtered complexes.
An important result due to Robbiano \cite{Ro} states that it is possible to ``lift'' a graded free $G$-resolution ${\bf G}$ of $\gr_\MM (M)$ to a free $R$-resolution ${\bf F}=\{F_i\}$ of $M$ together with 
$\mm$-stable filtrations $\{F^j_i\}_{j\in \mathbb{N}}$ on each free module $F_i$   such that
$\gr ({\bf F})={\bf G}$.
The lifted resolution ${\bf F}$ may not be minimal in general, even if we start from a minimal one, and the filtration  on $F_i$ may not be the $\mm$-adic one.
Compare also \cite[Theorem 1.8]{RS1}.

We refer to \cite{RS1}, \cite[Chapter 5]{E} for further background on filtered modules and to \cite[Appendix 3]{E} for background on spectral sequences.

\begin{proof}[Proof of Theorem \ref{Main}]
We analyze the convergence of the  spectral sequence  
$$
\Tor^{G}(\gr_\MM (M), \gr_\NN (N) ) \Rightarrow  \Tor^R(M,N). 
$$
The spectral sequence is constructed as follows.
Let $\MM=\{M^j\}_{j\in \mathbb{N}}$ and $\NN=\{N^j\}_{j\in \mathbb{N}}$ denote the given filtrations on $M,N$.
Let ${\bf F}=\{F_i\}$ be a free $R$-resolution of $M$ with $\mm$-stable filtrations $\mathcal{F}_i=\{F^j_i\}_{j\in \mathbb{Z}}$ on each $F_i$   such that
the associated graded complex $\gr ({\bf F})$ is a graded free $G$-resolution of $\gr_\MM (M)$.
Consider the complex  ${\bf L} =  {\bf F} \otimes_R N$ .
We equip each $L_i = F_i \otimes_R N$   with  filtrations $\mathcal{L}_i=\{L_i^j\}_{j\in\mathbb{N}}$ defined by
$$
L_i^j = \sum_{j_1+j_2=j} \Image \left( F_i^{j_1} \otimes_R N^{j_2} \rightarrow F_i \otimes_R N\right)
$$
where the maps in parentheses are obtained by tensoring the natural maps $ F_i^{j_1} \rightarrow F_i $ and $  N^{j_2} \rightarrow  N$.
Since $\mathcal{F}_i$ and $\MM$ are $\mm$-stable,  $\mathcal{L}_i$ is $\mm$-stable as well.
With this filtration we have the isomorphism of graded free complexes over $G$
$$
\gr({\bf L}) \cong \gr({\bf F}) \otimes_G \gr_\NN (N).
$$
By the Artin-Rees lemma, $\mathcal{L}_i$ in turn induces    an $\mm$-stable filtration on  $H_i({\bf L})=\Tor_i^R(M, N)$.

For the spectral sequence $\{^rE\}_{r\geq 1}$ of the filtered complex ${\bf L}$ we have
\begin{eqnarray*}
^1E_i^j & = & H_i(\gr({\bf L}))_j=H_i(\gr({\bf F})\otimes_G\gr_\MM (N))_j = \Tor_i^G(\gr_\MM (M), \gr_\NN (N))_j, \\
^\infty E_i^j & = & \gr( H_i({\bf L}))_j=\gr( H_i({\bf F}\otimes_R N))_j = \gr(\Tor_i^R(M, N))_j.
\end{eqnarray*}
The $\Bbbk$-vector space $^{r+1}E_i^j$ is a subquotient of $^rE_i^j$ for each $r$, hence the cancellation from $^1E_i^j$ to $^\infty E_i^j$ is the sum of  the cancellations from $^rE_i^j$ to $^{r+1}E_i^j$ for all $r\geq 1$.
In order to prove the theorem, it suffices to examine  cancellations in two consecutive pages $^rE, ^{r+1}E$ of the spectral sequence, so we fix $r$ for the rest of the proof.

Let $d$ denote the differential of ${\bf L}$.
Cycles and boundaries in the spectral sequence are given by
\begin{eqnarray*}
^{r+1}Z^j_i &=&\frac{ \{z \in L_i^j \, : \, dz \in L_{i-1}^{j+r+1}\}+L_{i}^{j+1} }{L_i^{j+1}+dL_{i-1}^j},\\
^{r+1}B^j_i &=&\frac{ (L^j_i \cap dL_{i-1}^{j-r})+L^{j+1}_i }{L^{j+1}_{i}+dL^j_{i-1}}.
\end{eqnarray*}
The inclusions  ${^rB_i^j} \subseteq {^{r+1}B_i^j}\subseteq {^{r+1}Z_i^j} \subseteq {^rZ_i^j}$
induce  maps of $\Bbbk$-vector spaces
$$
^rE_i^j = \frac{^rZ_i^j}{^rB_i^j} \overset{\pi_{i,j}}\longrightarrow \frac{^rZ_i^j}{^{r+1}B_i^j} \overset{\iota_{i,j}}\longleftarrow \frac{^{r+1}Z_i^j}{^{r+1}B_i^j} = {^{r+1}E_i^j}
$$
with $\pi_{i,j}$  surjective and $\iota_{i,j}$  injective,
so we have
$$
\dim_\Bbbk\, ^rE_i^j = \dim_\Bbbk  {^{r+1}E_i^j} + \dim_\Bbbk \ker (\pi_{i,j})+\dim_\Bbbk\coker( \iota_{i,j}).
$$
Therefore a cancellation from page $r$ to page $r+1$ can 
occur either in $\ker (\pi_{i,j})$ or in $\coker( \iota_{i,j})$.
We write down explicitly

\begin{eqnarray*}
\coker (\iota_{i,j})&=& \frac{^rZ_i^j}{^{r+1}Z_i^j}=\frac{\{z \in L^j_i \, : \, dz \in L^{j+r}_{i-1}\} }{\{z\in L^j_i \, : \, dz \in L^{j+r+1}_{i-1}\}+(L^{j+1}_i \cap \{z\in L^j_i \, : \, dz \in L^{j+r}_{i-1}\})},\\
\ker (\pi_{i-1,j+r}) &=& \frac{^{r+1}B_{i-1}^{j+r}}{^rB_{i-1}^{j+r}} = \frac{(L^{j+r}_{i-1}\cap dL^{j}_{i})+L^{j+r+1}_{i-1}}{(L^{j+r}_{i-1}\cap dL^{j+1}_{i})+L^{j+r+1}_{i-1}}.
\end{eqnarray*}
The differential $d$ induces a $\Bbbk$-linear map $\delta : \coker (\iota_{i,j}) \rightarrow \ker (\pi_{i-1,j+r})$
which sends $\overline{z}$ to $\overline{dz}$,
and from the two expressions above we obtain immediately that $\delta$ is bijective.
We conclude that cancellations in $\coker(\iota)$  in homological degree $i$ and internal degree $j$
correspond bijectively to cancellations in $\ker(\pi)$ in homological degree $i-1$ and internal degree $j+r$,
thus yielding  the negative consecutive cancellations at each page.
\end{proof}

\begin{rem}
We see from the proof that the degree $r=b-a$ of a negative consecutive  cancellation $z^{i+1}t^a+z^it^b$ corresponds to the page of the spectral sequence in which the cancellation occurs.
\end{rem}

We illustrate Theorem \ref{Main} with a couple of examples.

\begin{ex}
Let $R = \Bbbk[[X,Y]]$ be the formal power series ring with  $\mm=(X,Y)$ and let 
$M = R/(X^2-Y^3),  N = R/(X^2-Y^a)$ 
for some integer $a>3$. 
We compute
\begin{eqnarray*}
\Tor_0^R(M,N) &=& M\otimes_RN = R/(X^2-Y^3, X^2-Y^a),\\
\Tor_1^R(M,N) &=& \frac{(X^2-Y^3)\cap(X^2-Y^a)}{(X^2-Y^3)(X^2-Y^a)}=0.
\end{eqnarray*}
The associated graded ring of $R$ is $G= \Bbbk[x,y]$ where $x,y$ are the initial forms  of $X,Y$,
while the associated graded modules with respect to the $\mm$-adic filtrations are $\gr_\mm(M)=\gr_\mm(N)= G/(x^2)$.
We get
\begin{eqnarray*}
\Tor_0^G(\gr_\mm(M), \gr_\mm(N)) &=& \gr_\mm(M) \otimes_G \gr_\mm(N) = G/(x^2),\\
\Tor_1^G(\gr_\mm(M), \gr_\mm(N)) &=& \frac{(x^2)\cap(x^2)}{(x^2)(x^2)} = \frac{(x^2)}{(x^4)},
\end{eqnarray*}
hence their Hilbert series are respectively
$$
1 + 2\sum_{j\geq 1} t^j
\quad\mbox{ and }\quad
zt^2 + 2z\sum_{j\geq 3} t^j.
$$
Since the ideal $(X^2-Y^3, X^2-Y^a)$ is $\mm$-primary of colength 6 and the cancellations have negative degree by Theorem \ref{Main}, 
we conclude that we have precisely the cancellations $zt^2+ t^3$ and $2z t^j + 2t^{j+1}$ for all $j\geq 3$.
\end{ex}

Now we give an example of an infinite free resolution where no cancellation occurs.

\begin{ex}
Let $(R,\mm)$ be a Noetherian local hypersurface ring with associated graded ring $G \cong \Bbbk[x_1, \ldots, x_n]/(x_1^e)$ for some $e\geq 3$.
Suppose  $M$ is a factor ring of $R$ whose associated graded ring with respect to the $\mm$-adic filtration is of the form $\gr_\mm(M) \cong\Bbbk[x_1, \ldots, x_n]/I$ where 
$$
I = (x_1^d, x_1^{d-1}x_2, \ldots, x_1^{d-1}x_m)
$$
for some $m\leq n$ and $2\leq d<e$.
Since $(x_1^e)\subseteq I$ are stable monomial ideals, we can use \cite[Theorem 3]{IP} to determine
\begin{equation}\label{exampleNoCancellation}
\sum_{i,j}\dim_\Bbbk \Tor_i^{\gr_\mm (R)}(\gr_\mm (M),\Bbbk)_j z^it^j = \left( 1 + m z t^d +(m-1)z^2t^{d+1})\right)\sum_{k\geq 0} z^{2k}t^{ke}.
\end{equation}
In particular, we obtain the degrees of the non-zero graded components of $\Tor_i^{\gr_\mm (R)}(\gr_\mm (M),\Bbbk)$:
\begin{itemize}
\item if $i$ is odd then $\Tor_i^{\gr_\mm (R)}(\gr_\mm (M),\Bbbk)$ is concentrated in the single degree 
$ \frac{i-1}{2}e+d$;

\item if $i$ is even then $\Tor_i^{\gr_\mm (R)}(\gr_\mm (M),\Bbbk)$ is concentrated in  degrees
$\frac{i-2}{2}e+d+1$ and $\frac{i}{2}e$.
\end{itemize}
Since $d<e$, we conclude that the bigraded Hilbert series (\ref{exampleNoCancellation}) admits no negative consecutive cancellation.
By Theorem \ref{Main} we get 
$$
\gr (\Tor_i^R(M,\Bbbk))_j = \Tor_i^{\gr_\mm (R)}(\gr_\mm (M),\Bbbk)_j \quad \mbox{for all } i,j\geq 0.
$$
\end{ex}

\subsection*{Acknowledgements}
The author would like to thank his advisor Giulio Caviglia for suggesting this problem.
The author was supported by a grant of the Purdue Research Foundation.

\end{document}